\newcommand{\qed}{$\;\;\;\Box$}
\newenvironment{proof}{\par\smallbreak{\sl Proof.~}}
{\unskip\nobreak\hfill \qed \par\medbreak}
\newcounter{claim}
\renewcommand{\theclaim}{\arabic{claim}}
{\par\medskip\par}
\newcommand{\hide}[1]{}
\newcommand{\N}{{\mathbb N}}
\newcommand{\R}{{\mathbb R}}
\newcommand{\Z}{{\mathbb Z}}
\newcommand{\LL}{{\cal L}}
\newcommand{\beq}{\begin{equation}}
\newcommand{\ee}{\end{equation}}
\renewcommand{\d}{\partial}
\newtheorem{thm}{Theorem}[section]
\newtheorem{lemma}[thm]{Lemma}
\newtheorem{defn}[thm]{Definition}
\newtheorem{rem}[thm]{Remark}
\newcommand{\al}{\alpha}
\newcommand{\be}{\beta}
\newcommand{\ga}{\gamma}
\newcommand{\eps}{\varepsilon}
\newcommand{\vphi}{\varphi}
\newcommand{\om}{\omega}
\newcommand{\reff}[1]{(\ref{#1})}
\newcommand{\diag}{\mathop{\rm diag}\nolimits}
\renewcommand{\Im}{\mathop{\mathrm{Im}}\nolimits}
\title{
Robustness of Exponential Dichotomies of Boundary-Value Problems
for General First-Order Hyperbolic Systems
}
\newcounter{thesame}
\author{
I.~Kmit
\ \ \ L.~Recke\ \ \ V.~Tkachenko\\
{\small
Institute of Mathematics, Humboldt University of Berlin,}
\\
{\small Rudower Chaussee 25,
\small D-12489 Berlin, Germany and
}
\\
{\small
Institute for Applied Problems of Mechanics and Mathematics,
}
\\
{\small
Ukrainian Academy of Sciences,
Naukova St.\ 3b,
79060 Lviv, Ukraine
}
\\
{\small   E-mail:
{\tt kmit@informatik.hu-berlin.de}}\\[5mm]
{\small
Institute of Mathematics, Humboldt University of Berlin,}\\
{\small
Rudower Chaussee 25,
D-12489 Berlin, Germany}\\
{\small   E-mail:
{\tt recke@mathematik.hu-berlin.de}}
\\[5mm]
{\small
Institute of Mathematics, Ukrainian Academy of Sciences,}\\
{\small
Tereschenkivska st. 3,
Kiev-4, 01601 Ukraine
}\\
{\small   E-mail:
{\tt vitk@imath.kiev.ua}}
}
\date{}
\begin{document}

\maketitle

\begin{abstract}
\noindent
We examine robustness of  exponential dichotomies  of boundary value problems
for general linear first-order one-dimensional hyperbolic systems. The boundary conditions are
supposed to be of  types ensuring smoothing solutions in finite time, which includes
reflection boundary conditions. We show
that the dichotomy survives in the space of continuous functions under small perturbations
of all coefficients in the differential equations.
\end{abstract}

\emph{Key words:} first-order hyperbolic systems,  
smoothing boundary conditions, exponential dichotomies, robustness.

\emph{Mathematics Subject Classification:} 35B20, 35B30, 35B40, 35F40, 35L04

\section{Introduction and main results}\label{sec:intr}
\renewcommand{\theequation}{{\thesection}.\arabic{equation}}
\setcounter{equation}{0}

The concept of exponential dichotomy plays a crucial role in various aspects of the perturbation
and the stability theory \cite{Coppel,DK,SS,Sam,T}. An important problem here is
robustness of the exponential dichotomy of a system, i.e., its stability  with respect to small
perturbations in the system. This problem is extensively studied in the literature,
e.g., in \cite{JS,NP,Pal,Yi} for finite-dimensional case and  in \cite{BV,ChL1,PS}
for infinite-dimensional case. It should be noted that the hyperbolic case (see, e.g., \cite{RB})
seems more complicated here in comparison to ODEs and parabolic PDEs, mostly due
to worse regularity properties of hyperbolic operators.

\bigskip

We address the issue of stability
 of exponential dichotomies for general linear one-dimensional
first-order hyperbolic systems
\begin{equation}\label{eq:1}
(\partial_t  + a(x,t,\eps)\partial_x + b(x,t,\eps))
 u = 0, \quad x\in(0,1)
\end{equation}
subjected to (nonlocal) boundary conditions
\beq\label{eq:3}
\begin{array}{l}
\displaystyle
u_j(0,t) = \sum\limits_{k=m+1}^np_{jk}(t)u_k(0,t)+\sum\limits_{k=1}^mp_{jk}(t)u_k(1,t),\quad  1\le j\le m,
\nonumber\\
\displaystyle
u_j(1,t) = \sum\limits_{k=1}^mq_{jk}(t)u_k(0,t)+ \sum\limits_{k=m+1}^nq_{jk}(t)u_k(1,t), \quad   m< j\le n.
\nonumber
\end{array}
\ee
Here  $u=(u_1,\ldots,u_n)$ is a  vector of real-valued functions,
$a=\diag(a_1,\dots,a_n)$ and $b=\{b_{jk}\}_{j,k=1}^n$ are matrices of real-valued functions, and
$0\le m\le n$ are fixed integers.

Set
$$
\Pi = \{(x,t)\,:\,0<x<1, -\infty<t<\infty\}.
$$
Assume that there exists
$\eps_0 > 0$ such that for all $\eps\le\eps_0$ and all $(x,t)\in\overline\Pi$ the
following conditions are fulfilled:
\begin{equation}\label{eq:dif}
a_j, b_{jk}, p_{jk}, q_{jk} \mbox{  are  continuously differentiable in } x, t, \eps \mbox{ for all }
j,k\le n,
\end{equation}
\begin{equation}\label{eq:L1}
a_j>0 \mbox{ for all } j\le m\quad \mbox{ and }\quad a_j<0 \mbox{ for all } j>m,
\end{equation}
\begin{equation}\label{eq:L2}
\inf_{x,t}|a_j|>0
\quad \mbox { for all } j\le n,
\end{equation}
\begin{equation}\label{eq:L3}
\sup_{x,t}\left\{|a_j|,|\d_xa_j|,|\d_ta_j|,|\d_\eps a_j|\right\}<\infty \quad\mbox { for all } j\le n,
\end{equation}
\begin{equation}\label{eq:L5}
\sup_{x,t}\left\{|p_{jk}|,|q_{jk}|,|b_{jk}|,|\d_\eps b_{jk}|,|\d_tb_{jk}|
\right\}<\infty \quad\mbox { for all } j,k\le n
\end{equation}
\beq
\label{cass}
\begin{array}{l}
\mbox{for all } 1 \le j \not= k \le n \mbox{ there exist }  
\be_{jk},\ga_{jk}
\in C^1\left([0,1]\times \R\times[0,\eps_0)\right)
\\\mbox{such that }
b_{jk}(x,t,0)=\be_{jk}(x,t,\eps)\left(a_k(x,t,\eps)-a_j(x,t,0)\right)\\\mbox{and }
b_{jk}(x,t,\eps)=\ga_{jk}(x,t,\eps)\left(a_k(x,t,\eps)-a_j(x,t,\eps)\right),
\end{array}
\ee
and
\begin{equation}\label{eq:L4}
\begin{array}{l}
\displaystyle
\sup_{x,t}\left\{|\d_x\be_{jk}|,|\d_t\be_{jk}|,|\d_x\ga_{jk}|,|\d_t\ga_{jk}|
\right\}<\infty \quad\mbox { for all }  j\ne k.
\end{array}
\end{equation}

Given $s\in\R$, set
$$
\Pi_{s} = \{(x,t)\,:\,0<x<1, s<t<\infty\}.
$$
We subject the system \reff{eq:1}--\reff{eq:3} by the initial conditions at time $t=s$:
\begin{eqnarray}
u(x,s) = \varphi(x), \quad x\in[0,1],
\label{eq:2}
\end{eqnarray}
and consider the initial-boundary value problem (\ref{eq:1}), (\ref{eq:3}), (\ref{eq:2}) in $\Pi_{s}$
for arbitrarily fixed $s\in\R$.
Now we intend to switch to a weak formulation of the latter using integration along characteristic curves:
 For given $j\le n$, $x \in [0,1]$, $t \in \R$, and $\eps\in[0,\eps_0]$ the $j$-th characteristic of \reff{eq:1}
passing through the point $(x,t)\in\overline\Pi_s$ is defined
as the solution $\xi\in [0,1] \mapsto \om_j(\xi;x,t,\eps)\in \R$ of the initial value problem
\beq\label{char}
\partial_\xi\om_j(\xi;x,t,\eps)=\frac{1}{a_j(\xi,\om_j(\xi;x,t,\eps),\eps)},\;\;
\om_j(x;x,t,\eps)=t.
\ee
Write
\begin{eqnarray*}
c_j(\xi,x,t,\eps)=\exp \int_x^\xi
\left(\frac{b_{jj}}{a_{j}}\right)(\eta,\om_j(\eta;x,t,\eps),\eps)\,d\eta,\quad
d_j(\xi,x,t,\eps)=\frac{c_j(\xi,x,t,\eps)}{a_j(\xi,\om_j(\xi;x,t,\eps),\eps)}.
\end{eqnarray*}
Due to \reff{eq:L2}, the characteristic curve $\tau=\om_j(\xi;x,t,\eps)$ reaches the
boundary of $\Pi_s$ in two points with distinct ordinates. Let $x_j(x,t,\eps)$
denote the abscissa of that point whose ordinate is smaller.
Let us introduce linear bounded operators  $R: C\left(\overline\Pi_s\right)^n\mapsto C\left([s,\infty)\right)^n$ and
$B^\eps: C\left(\overline\Pi_s\right)^n\mapsto C\left(\overline\Pi_s\right)^n$ and an affine bounded operator
$S: C\left(\overline\Pi_s\right)^n\mapsto C\left(\overline\Pi_s\right)^n$ by
\begin{eqnarray}
\begin{array}{ll}
\displaystyle (Ru)_j(t)=\sum\limits_{k=m+1}^np_{jk}(t)u_k(0,t)+\sum\limits_{k=1}^mp_{jk}(t)u_k(1,t),\quad 1\le j\le m,
\\
\displaystyle (Ru)_j(t)= \sum\limits_{k=1}^mq_{jk}(t)u_k(0,t)+ \sum\limits_{k=m+1}^nq_{jk}(t)u_k(1,t), \quad m<j\le n,
\end{array}\label{eq:R}
\end{eqnarray}
\begin{eqnarray}
\label{B}
(B^\eps u)_j(x,t)=c_j(x_j(x,t,\eps),x,t,\eps)
u_j\left(x_j(x,t,\eps),\om_j(x_j(x,t,\eps);x,t,\eps)\right),
\end{eqnarray}
and
\begin{eqnarray}
\label{S}
(Su)_j(x,t)=
\left\{\begin{array}{lcl}
(Ru)_j(t) & \mbox{if}&  t>s, \\
\vphi_j(x)      & \mbox{if}& t=s.
\end{array}\right.
\end{eqnarray}
By abuse of notation, we did not indicate the dependence of the above operators on $s$;
in fact, in the consideration below the value of $s\in\R$ will be arbitrarily fixed.

Straightforward calculations show that a $C^1$-map
$u: [0,1]\times[0,\infty) \to \R^n$ is a solution to
(\ref{eq:1}), (\ref{eq:3}), (\ref{eq:2})  if and only if
it satisfies the following system of integral equations
\begin{eqnarray}
\label{rep}
\lefteqn{
u_j(x,t)=(B^\eps Su)_j(x,t)}\nonumber\\
&&-\int_{x_j(x,t,\eps)}^x d_j(\xi,x,t,\eps)
\sum_{k=1\atop k\not=j}^n b_{jk}(\xi,\om_j(\xi;x,t,\eps),\eps)
u_k(\xi,\om_j(\xi;x,t,\eps))d\xi,\quad  j\le n.
\end{eqnarray}
Now, the notion of weak (continuous) solution in $\Pi_s$ can  be naturally defined as follows.
\begin{defn}\label{defn:cont}\rm
 A continuous  function $u$ is called a continuous solution to  (\ref{eq:1}), (\ref{eq:3}), (\ref{eq:2})
in $\overline\Pi_s$ if it satisfies \reff{rep}.
\end{defn}

For given $\eps>0$, denote by $U^\varepsilon(t,s):  C([0,1])^n\mapsto  C([0,1])^n$ the evolution operator of the system
 (\ref{eq:1})--(\ref{eq:2}) whose existence is given by Theorem \ref{evol_oper}, i.e,
a bounded operator mapping the values of solutions at time $s$ into their values at time $t$
and satisfying the properties $U^\varepsilon (s,s) =I$ and $U^\varepsilon (t,s)U^\varepsilon (s,\tau) =  U^\varepsilon (t,\tau)$
for all $t \ge s \ge\tau$.

We examine robustness of
exponential dichotomies for a range of boundary operators  ensuring
that smoothness of   solutions increases in  finite time. With this aim we will assume that
the system  (\ref{eq:1})--(\ref{eq:3})   has a smoothing property
studied in \cite{kmit,Km}.

\begin{defn}\label{defn:smoothing}\rm
Let $\eps>0$.
The evolution operator $U^\eps(t,s)$
 to the problem (\ref{eq:1}), (\ref{eq:3})
is called {\it smoothing}
if, for every $s\in\R$, there exists $t>s$ such that
$U^\eps(t,s)\vphi\in C^1\left([0,1]\right)^n$
for every $\vphi\in C\left([0,1]\right)^n$.
\end{defn}

In the following definition  the range of an operator $P$ will be denoted by  $\Im P$.

\begin{defn}\label{defn:dich}\rm
Let $\eps>0$. We say that the system  (\ref{eq:1})--(\ref{eq:3}) has an exponential dichotomy on $\R$
with exponent $\beta > 0$ and bound $M$ if there exist projections $P^\varepsilon(t), t\in\R,$
such that

(i) $U^\eps(t,s)P^\varepsilon(s) = P^\varepsilon(t)U^\varepsilon(t,s), \ t \ge s$;

(ii) $U^\varepsilon(t,s)|_{\Im(P^\varepsilon(s))}$ for  $t \ge s$ is an isomorphism
on $\Im(P^\varepsilon(s))$, then
$U^\varepsilon(s,t)$ is defined as an inverse map from $\Im(P^\varepsilon(t))$
to $\Im(P^\varepsilon(s))$;

(iii) $\|U^\varepsilon(t,s)(1 - P^\varepsilon(s))\| \le M e^{-\beta(t-s)}, \ t \ge s$;

(iv) $\|U^\varepsilon(t,s)P^\varepsilon(s)\| \le M e^{\beta(t-s)}, \ t \le s$.
\end{defn}
Here and below by $\|\cdot\|$ we denote the operator norm in $\LL\left(C([0,1])^n \right)$.
\bigskip

Now we formulate our main result.

\begin{thm}\label{robust}
Suppose that the system (\ref{eq:1})--(\ref{eq:3}) with $\eps=0$ has an exponential dichotomy
and the corresponding evolution operator $U^0(t,s)$ is bounded:
\beq\label{evol_bound}
\sup\limits_{0\le t-s\le 1}\|U^0(t,s)\|<\infty.
\ee
Moreover, assume that there is $\eps_0>0$ such that the following conditions are fulfilled:
 \reff{eq:dif}--\reff{eq:L4} and
\beq\label{U-smooth}
\begin{array}{ll}
\mbox{ There exists } k\in\N \mbox{ such that } \left(B^\eps R\right)^k= 0
\mbox{ for all } \eps\le\eps_0.
\end{array}
\ee
Then there exists $\eps^\prime \le \eps_0$ such that
 for all  $\eps\le \eps^\prime$
the system (\ref{eq:1}), (\ref{eq:3}) has an exponential dichotomy.
\end{thm}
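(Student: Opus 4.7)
The strategy is to reduce the robustness question to a standard roughness theorem for exponential dichotomies of evolution families. For this it suffices to establish two ingredients: (a) uniform norm-continuity of the evolution operators in $\eps$, in the sense $\sup_{0\le t-s\le T}\|U^\eps(t,s)-U^0(t,s)\|\to 0$ as $\eps\to 0$ for every fixed $T>0$; and (b) a compactness/smoothing gain of $U^\eps(t,s)$ for $t-s$ large enough, uniformly in $\eps$. Together with the dichotomy of $U^0$ and the short-time boundedness \reff{evol_bound}, these two ingredients will feed into the classical perturbation argument (cf.~\cite{ChL1}).

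Both ingredients are driven by the integral identity \reff{rep}, which can be rewritten schematically as $u=B^\eps Su+J^\eps u$, where $J^\eps$ denotes the Volterra-type integral operator appearing on the right-hand side of \reff{rep}. Iterating this equation $k$ times and exploiting the hypothesis $(B^\eps R)^k=0$, one sees that for $t-s$ exceeding $k$ times the maximal characteristic travel time, the pure boundary-reflection contribution is annihilated and $U^\eps(t,s)$ reduces to a finite sum of compositions in which every summand carries at least one factor $J^\eps$. Thanks to the structural assumption \reff{cass}, the factors $a_k-a_j$ hidden in the off-diagonal $b_{jk}$ can be absorbed by an integration by parts along the characteristics (as in \cite{kmit,Km}), showing that $J^\eps$ maps $C([0,1])^n$ into the subspace of functions Lipschitz continuous in $x$, with bound uniform in $\eps$. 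By Arzelà-Ascoli this yields (b).

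For (a) we use that, by \reff{eq:dif}-\reff{eq:L4}, all coefficients are $C^1$ in $\eps$ with uniform bounds, and therefore by classical ODE theory applied to the characteristic system \reff{char} (using the lower bound \reff{eq:L2}) the maps $\om_j(\xi;x,t,\eps)$, $x_j(x,t,\eps)$, $c_j$ and $d_j$ all depend continuously differentiably on $\eps$. Substituting these dependencies into the finite-sum representation obtained above, and using \reff{evol_bound} together with the cocycle property to bootstrap bounds onto every bounded time window, one obtains an estimate of the form $\|U^\eps(t,s)-U^0(t,s)\|=O(\eps)$ uniformly on $0\le t-s\le T$.

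With (a) and (b) in place, the standard roughness theorem for exponential dichotomies of evolution families on $\R$ in a Banach space (cf.~\cite{ChL1}) produces, for all sufficiently small $\eps$, projections $P^\eps(t)$ close in operator norm to $P^0(t)$ and satisfying the properties (i)-(iv) of Definition \ref{defn:dich}. The main obstacle I anticipate lies in step (a): because the characteristics $\om_j(\cdot;x,t,\eps)$ and the hitting abscissae $x_j(x,t,\eps)$ shift with $\eps$, the composition operator $B^\eps$ does \emph{not} converge to $B^0$ in operator norm on $C([0,1])^n$, only strongly; the role of the integral factors from step (b) is precisely to trade this merely pointwise continuity of the characteristics for uniform equicontinuity of the outputs, thereby upgrading the convergence to operator norm.
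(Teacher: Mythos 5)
Your general skeleton (reduce to a known roughness theorem; use $(B^\eps R)^k=0$, the structural hypothesis \reff{cass} and integration by parts along characteristics to gain smoothing) is the same as the paper's, and you correctly identify the central obstacle: $B^\eps\to B^0$ only strongly, never in operator norm. But your ingredient (a) is false as stated, and this is not a repairable technicality of the write-up --- it is exactly the difficulty the paper's architecture is built to avoid. For $0<t-s$ smaller than the smoothing time $d$ of Lemma \ref{lem:d}, the backward characteristics from $(x,t)$ reach the initial line after fewer than $k$ reflections, so $U^\eps(t,s)\vphi$ still contains pure transport terms of the form $c\,\vphi_{j'}\bigl(y^\eps(x)\bigr)$ with $y^\eps$ built from the $\eps$-dependent flow \reff{char}; whenever $a$ genuinely depends on $\eps$ one has $y^\eps\ne y^0$, and testing with $\vphi$ oscillating at the scale of $\|y^\eps-y^0\|_\infty$ shows that $\|U^\eps(t,s)-U^0(t,s)\|$ is bounded below by a positive constant. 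Hence $\sup_{0\le t-s\le T}\|U^\eps(t,s)-U^0(t,s)\|\not\to 0$, and the roughness theorem you invoke is being fed a hypothesis that cannot be established. Your closing remark (that the integral factors upgrade strong to norm convergence) cannot rescue this, because for small $t-s$ there are no integral factors in front of the transport terms. The correct repair is to use a roughness theorem that requires closeness only at one fixed separation $t-s=2d$ --- this is precisely the paper's Theorem \ref{robust_gen} (Henry's discrete-dichotomy argument) --- and your proposal never makes this restriction.

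Even after that repair, the estimate at $t-s=2d$ is not obtained by ``substituting the $C^1$-dependence on $\eps$ into the finite-sum representation,'' for two reasons. First, the difference $u-v$ of the two solutions is $O(1)$, not $o(1)$, at intermediate times in $(s,s+2d)$, so one cannot estimate the representation termwise; the paper must introduce the auxiliary unknown $w=D(u-v)$, derive for it the implicit Volterra equation \reff{rep_(u-v)4} (using that $(BS)^{r_0}(u-v)=0$ because $(u-v)(\cdot,s)\equiv0$), and close with Gronwall. Second, the smallness of the compositions $D(B-B^\eps)S$ and $D(D-D^\eps)$ --- which is where strong convergence actually gets upgraded to norm convergence --- is proved by showing that $\d_\eps(DB^\eps)$ and $\d_\eps(DD^\eps)$ exist and are uniformly bounded; when $\d_\eps$ falls on the composition argument $v^l_i\bigl(\eta,\om_k(\eta;\xi,\om_j(\xi;x,t),\eps)\bigr)$ one must integrate by parts along characteristics, the relevant change of variables carries a factor $1/(a_k-a_j)$ that is controlled only because $b_{jk}$ vanishes proportionally to $a_k-a_j$ by \reff{cass}, and the $\eps$-motion of the intersection points $x_{jk}(x,t,\eps)$ of $j$- and $k$-characteristics produces boundary terms that must be shown to cancel. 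None of this is ``classical ODE dependence on parameters.'' Finally, a smaller but genuine error: a single integral factor $J^\eps$ does not map $C([0,1])^n$ into Lipschitz functions --- the $x$-derivative of $D^\eps w$ hits the composition $w_k(\xi,\om_j(\xi;x,t,\eps))$ of a merely continuous $w$ and does not exist. One needs composed pairs $(D^\eps)^2$ and $D^\eps B^\eps$, as in the paper's formula \reff{v}, before integration by parts can be performed; and what is used downstream is not compactness via Arzel\`a--Ascoli but the quantitative bound \reff{apr_v}.
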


\begin{rem}\rm
 Note  that the boundary conditions \reff{eq:3}
together with the property \reff{U-smooth} generalize boundary conditions
appearing in  models of chemical kinetics \cite{AkrBelZel,Zel1}.
\end{rem}


\section{Basic facts}

The first fact follows from the results obtained in \cite{ijdsde,KmHo} and entails, in particular,
the existence of an evolution operator.

\begin{thm}\label{evol_oper}
Under the conditions  \reff{eq:dif}--\reff{eq:L4}, for
given $\eps>0$, $s\in\R$, $T>0$, and $\vphi\in C\left([0,1]\right)^n$ fulfilling the zero-order
compatibility conditions
\begin{eqnarray}
\begin{array}{ll}
\vphi_j(0) = (R\vphi)_j(s),\quad 1\le j\le m
\\
\vphi_j(1) = (R\vphi)_j(s), \quad m<j\le n,
\end{array}\label{comp}
\end{eqnarray}
the initial-boundary value problem (\ref{eq:1}), (\ref{eq:3}), (\ref{eq:2})  has a unique continuous
solution in $\Pi_s$ and this solution satisfies the apriori estimate
\beq\label{apriori}
\|u\|_{C\left(\overline\Pi_s\setminus\Pi_{s+T}\right)^n}\le C(T)\|\vphi\|_{C\left([0,1]\right)^n}
\ee
with a constant $C(T)>0$ depending on $T$, but  not on $s$, $\vphi$, and $\eps\le\eps_0$.
\end{thm}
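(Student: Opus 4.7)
The plan is to realize $u$ as the unique fixed point in $C\left(\overline\Pi_s\cap\{s\le t\le s+T\}\right)^n$ of the affine operator $\F^\eps u:=B^\eps Su+K^\eps u$, where $K^\eps$ denotes the integral operator on the right-hand side of \reff{rep}. The compatibility \reff{comp} is exactly what is needed for $S\vphi$, and hence $\F^\eps u$, to be continuous at the corners $(0,s)$ and $(1,s)$, so that $\F^\eps$ sends the ambient continuous-function space into itself.

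I would write $\F^\eps u=Lu+f$ with linear part $Lu=B^\eps Ru+K^\eps u$ and inhomogeneity $f=B^\eps S0$ (the transport of $\vphi$ along characteristics that reach $\{\tau=s\}$), and construct the solution via the Neumann series $u=\sum_{k\ge 0}L^kf$. The operator $K^\eps$ is Volterra along backward characteristics, so iterated applications give the standard factorial decay. The operator $B^\eps R$ is bounded uniformly in $\eps$ by \reff{eq:L3}--\reff{eq:L5} but, crucially, it is nilpotent on the strip: each application moves the reading point backward in time along a characteristic, and by \reff{eq:L2}--\reff{eq:L3} the time between two successive lateral reflections of any characteristic is at least $1/\sup|a_j|>0$, so $(B^\eps R)^N\equiv 0$ on $[s,s+T]$ as soon as $N\ge 1+T\sup|a_j|$. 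Expanding $L^k$ into its $2^k$ monomial terms, only those with fewer than $N$ factors of $B^\eps R$ survive, and a standard Volterra estimate yields
$$
\|L^k\|\le\sum_{j=0}^{\min(k,N-1)}\frac{k!}{j!(k-j)!}\,C_R^{j}\,\frac{(C_K T)^{k-j}}{(k-j)!}
$$
for uniform constants $C_R,C_K$ depending only on the bounds in \reff{eq:L2}--\reff{eq:L5}; the right-hand side is summable in $k$, so $I-L$ is boundedly invertible and $u=(I-L)^{-1}f$ is the desired unique continuous solution on the strip.

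The a priori estimate \reff{apriori} is then immediate: $\|f\|_C\le\sup_j\sup|c_j|\cdot\|\vphi\|_{C([0,1])^n}$ by \reff{eq:L3}--\reff{eq:L5}, while $\|(I-L)^{-1}\|$ is bounded by a constant depending only on $T$ and on the uniform bounds \reff{eq:L2}--\reff{eq:L5}, and is therefore independent of $s$, $\vphi$, and $\eps\le\eps_0$. The principal difficulty is precisely this uniformity in $\eps$: since the characteristic feet $x_j(x,t,\eps)$ and transport cocycles $c_j,d_j$ depend on $\eps$ only implicitly through the ODE \reff{char}, the continuous dependence of \reff{char} on its parameters, guaranteed by \reff{eq:dif} together with the uniform non-degeneracy \reff{eq:L2} and boundedness \reff{eq:L3}, is used to extract $\eps$-independent bounds for $L$ and $f$, which in turn closes the argument.
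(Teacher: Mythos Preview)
The paper does not actually prove this theorem; it simply states that the result ``follows from the results obtained in \cite{ijdsde,KmHo}'' and uses it as a black box. Your proposal, by contrast, supplies a self-contained argument via the integral system \reff{rep}, and the route you take---Neumann series for the affine fixed-point map, with the linear part split into a nilpotent boundary-reflection piece and a Volterra integral piece---is the standard one and almost certainly what the cited references do.

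Your sketch is correct. Two small clarifications are worth making explicit if you write this out in full. First, the symbol $B^\eps R$ for the linear part of $B^\eps S$ is slightly informal, since $R$ maps into functions of $t$ alone; what you mean (and what the paper also means when it writes $B^\eps R$ later, in $\overline\Pi_{s+d}$) is the operator that returns $c_j\cdot(Ru)_j(\tau)$ when the backward $j$-characteristic from $(x,t)$ lands on the lateral boundary at time $\tau$, and $0$ when it lands on $\{t=s\}$. With this reading your nilpotency bound $N\ge 1+T\sup|a_j|$ is exactly right, because the \emph{first} application of $B^\eps$ may move back only a little (if $(x,t)$ is near the incoming boundary), but every subsequent reflection traverses the full interval. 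Second, the Volterra factorial $(k-j)!^{-1}$ in your monomial estimate survives the insertion of the bounded factors $B^\eps R$ because each $B^\eps R$ only pushes the time argument further back, so the $K^\eps$-integrations are still over a nested simplex in $[s,s+T]$; this is routine but deserves one line. The uniformity of $C(T)$ in $s,\vphi,\eps$ then follows exactly as you say, from the uniform bounds \reff{eq:L2}--\reff{eq:L5} and the smooth dependence of \reff{char} on its data.
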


The second fact can be readily obtained by \cite[Theorem 2.7]{Km} and the argument used in its proof.
It states the smoothing property of the evolution operator as well as the fact that
the time at which the continuous solution to (\ref{eq:1}), (\ref{eq:3}), (\ref{eq:2})
reaches the $C^1$-regularity does not exceed a fix number $d$, whatsoever initial time $s\in\R$.

\begin{lemma}\label{lem:d}
Under the conditions  \reff{eq:dif}--\reff{eq:L4} and \reff{U-smooth} the evolution operator is smoothing
and satisfies the following property:
\beq\label{d}
\begin{array}{ll}
\mbox{ There exists } d>0  \mbox{ such that for any } s\in\R  \mbox{ and } t
\mbox{ as in Definition \ref{defn:smoothing}}, \\\mbox{ the inequality }
|t-s|\le d
 \mbox{ is true for all }
 \eps\le\eps_0.
\end{array}
\ee
\end{lemma}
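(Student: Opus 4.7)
The smoothing property itself is already contained in \cite[Theorem~2.7]{Km}; what needs to be added here is the \emph{uniformity} of the smoothing time, both over shifts $s\in\R$ of the initial moment and over the parameter $\eps\le\eps_0$. My plan is to revisit the integral representation \reff{rep} and track the size of the smoothing time explicitly.

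The first step is geometric. From \reff{char}, \reff{eq:L2}, and the Lipschitz dependence on $\eps$ provided by \reff{eq:L3} (which makes $\eps\mapsto\inf_{x,t}|a_j(x,t,\eps)|$ continuous and hence uniformly positive on the compact set $[0,\eps_0]$), there is a finite constant $\tau_*$, depending only on the coefficients and on $\eps_0$, such that every characteristic $\xi\mapsto\om_j(\xi;x,t,\eps)$ crosses the strip $[0,1]$ in a time at most $\tau_*$. Consequently, whenever $t-s>\tau_*$ the first backward exit point $(x_j(x,t,\eps),\om_j(x_j;x,t,\eps))$ of the $j$-th characteristic from $\overline\Pi_s$ lies on a lateral boundary, not on $\{\tau=s\}$. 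On this slab the operator $S$ from \reff{S} acts as $R$, so $B^\eps S=B^\eps R$ there.

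The second step is algebraic. Writing \reff{rep} schematically as $u=B^\eps Su+K^\eps u$, with $K^\eps$ denoting the integral operator on the right-hand side, and iterating, I obtain, in the slab $t-s>k\tau_*$,
$$
u=(B^\eps R)^k u+\sum_{\ell=0}^{k-1}(B^\eps R)^\ell K^\eps u.
$$
By hypothesis \reff{U-smooth} the first term vanishes for every $\eps\le\eps_0$, so on this slab $u$ is expressed purely through the integral operator $K^\eps$, with no boundary-trace contribution of $u$ remaining.

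The third and most delicate step, which is also the main obstacle, is the regularity gain for the integral term. Here I invoke the coupling condition \reff{cass}, which for $\eps>0$ provides the factorization $b_{jk}(x,t,\eps)=\ga_{jk}(x,t,\eps)(a_k(x,t,\eps)-a_j(x,t,\eps))$ for $j\ne k$. This extra factor lets one rewrite the integrand in $(K^\eps u)_j$ as a directional derivative of $u_k$ along its own characteristic, after which integration by parts (exactly as in the proof of \cite[Theorem~2.7]{Km}) trades the missing $x$-derivative for $C^0$-norms of $u$, with remainder terms controlled uniformly in $s$ and in $\eps\le\eps_0$ by the bounds \reff{eq:L3}--\reff{eq:L4}. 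The resulting expression is $C^1$ in $(x,t)$, and since the operators $B^\eps R$ are composition with $C^1$ boundary coefficients and pullback along $C^1$-curves, every term in the finite sum above is $C^1$. Setting $d:=k\tau_*+1$ then yields the claimed uniform smoothing time, completing the proof.
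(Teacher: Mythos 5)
Your skeleton (uniform bound $\tau_*$ on the characteristic crossing time, iteration of the representation \reff{rep} until \reff{U-smooth} annihilates the pure boundary term, then a regularity gain for the integral remainder via \reff{cass} and integration by parts, all uniformly in $s$ and $\eps\le\eps_0$) is indeed the route of the paper, which itself delegates the details to \cite[Theorem~2.7]{Km} and reproduces the mechanism in Section~3 when proving \reff{apr_v}. However, there is a genuine gap in the way your Steps 2 and 3 fit together. Your iteration stops at $u=\sum_{\ell=0}^{k-1}(B^\eps R)^{\ell}K^\eps u$, so every surviving term contains a \emph{single} application of $K^\eps=D^\eps$ to $u$, and a single $D^\eps$ does not map $C^0$ into $C^1$: differentiating $(K^\eps u)_j$ in $x$ produces the term $\partial_2 u_k$ evaluated along the $j$-th characteristic, and for a merely continuous (weak) solution neither $\partial_2 u_k$ nor the derivative of $u_k$ along the $j$-th characteristic ($j\ne k$) exists; only the derivative of $u_k$ along its \emph{own} characteristic is meaningful. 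Your proposed splitting of this term into a total $\xi$-derivative plus "the directional derivative of $u_k$ along its own characteristic" uses the PDE for $u_k$ pointwise, which is circular, since the claim being proved is precisely that $u$ becomes $C^1$; and the integrand of $(K^\eps u)_j$ itself is not a total $\xi$-derivative, so the announced integration by parts cannot be performed on that expression as it stands.

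The repair is the extra substitution that the paper makes explicit: insert $u=B^\eps Su+D^\eps u$ once more inside $K^\eps$, so that the smoothing is carried by the \emph{double} compositions $D^\eps B^\eps R$ and $(D^\eps)^2$, exactly as in formula \reff{v}. In those double compositions the unknown enters as $v_i\bigl(\eta,\om_k(\eta;\xi,\om_j(\xi;x,t,\eps),\eps)\bigr)$, and it is the $\xi$-derivative of the \emph{inner composition} $\om_k(\eta;\xi,\om_j(\xi;\cdot),\eps)$ --- not of $u$ itself --- that produces the factor $(a_k-a_j)/(a_ja_k)$ matching the factorization \reff{cass}; the integration by parts is then carried out on smooth approximants $v^l$ as in \reff{eq:lim_0}, \reff{by_parts}, \reff{final}, and one passes to the limit in $C^1$. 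With this correction (and, say, $d=(k+1)\tau_*+1$ instead of $k\tau_*+1$), your argument becomes the paper's; the uniformity in $s$ and $\eps\le\eps_0$ that you extract from \reff{eq:L2}--\reff{eq:L4} is exactly the additional content the lemma needs beyond \cite[Theorem~2.7]{Km}.
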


The third fact is a variant of  \cite[Theorem 7.6.10]{H}.

\begin{thm}\label{robust_gen}
Assume that the evolution operator $U^0(t,s)$ has an exponential dichotomy on
 $\R$ and satisfies \reff{evol_bound}.
Then there exists $\eta > 0$ such that for all $\eps>0$ with
$$
\|U^0(t,s) - U^\varepsilon(t,s)\| < \eta, \ \ {\rm whenever} \ \ t-s = 2d
$$
the evolution operator $U^\eps(t,s)$ has an exponential dichotomy on
 $\R$ also.
\end{thm}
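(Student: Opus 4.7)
The plan is to reduce the continuous-time robustness statement to a discrete-time roughness result. Fix the sampling period $T := 2d$ and introduce the discrete evolution operators
$$V^\eps_n := U^\eps(nT,(n-1)T),\quad n\in\Z.$$
Sampling the exponential dichotomy of $U^0$ at the times $\{nT\}$ yields a discrete exponential dichotomy of $\{V^0_n\}$ with projections $P^0(nT)$, exponent $\beta T$, and bound depending on $M$ and on \reff{evol_bound}. By hypothesis, $\|V^\eps_n-V^0_n\|<\eta$ for every $n$.

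Next, apply the standard roughness theorem for discrete exponential dichotomies, established via a Banach fixed-point argument on $\ell^\infty$-sequences with the Green's function of $\{V^0_n\}$ as kernel (as in Henry or Coppel). If $\eta$ is chosen small enough in terms of the dichotomy constants of $\{V^0_n\}$, this yields an exponential dichotomy of the perturbed discrete family $\{V^\eps_n\}$ with projections $P^\eps_n$ close to $P^0(nT)$ and exponent close to $\beta T$.

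The final step is to interpolate between the sampling times to obtain a continuous-time dichotomy. Define $P^\eps(t)$ on $[nT,(n+1)T]$ by requiring $\Im P^\eps(t)=U^\eps(t,nT)\,\Im P^\eps_n$ and $\ker P^\eps(t)=U^\eps(t,nT)\,\ker P^\eps_n$. The invariance (i) and the invertibility (ii) of Definition \ref{defn:dich} hold by construction. For arbitrary $s\le t$ write $t-s = kT + r_1 + r_2$ with $r_1,r_2\in[0,T]$, so that the estimates (iii)--(iv) reduce to the discrete ones multiplied by at most two factors $U^\eps(\cdot,\cdot)$ with time increment in $[0,T]$; these factors are uniformly bounded by \reff{evol_bound} for $\eps=0$ and, for $\eps>0$, by the a priori estimate \reff{apriori} of Theorem \ref{evol_oper}, which is uniform in $\eps\le\eps_0$.

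The main obstacle is the discrete roughness step: one must build the perturbed unstable and stable subspaces as fixed points of a contraction on a suitable sequence space, using the Green's function of $\{V^0_n\}$, and verify that the resulting $P^\eps_n$ are genuine projections with the required closeness. The contraction constant is of the form $\eta\cdot\Gamma$, where $\Gamma$ is the operator norm of the discrete Green's function and depends only on $M$, $\beta$, $T$, and the bound \reff{evol_bound}; choosing $\eta$ so that $\eta\Gamma<1$ delivers a unique fixed point and the exponential estimates with slightly enlarged bound and slightly reduced exponent.
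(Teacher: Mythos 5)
Your overall strategy is exactly the paper's: sample the flow at period $2d$, observe that the hypothesis makes the sampled families uniformly close, invoke the discrete roughness theorem (the paper uses Henry's Theorem 7.6.7, which is the fixed-point argument you sketch), and then transfer the discrete dichotomy of the perturbed sequence back to a continuous-time dichotomy (the paper cites Henry's Exercise 10, pp.\ 229--230, and Chow--Leiva, Theorem 4.1, for this step rather than reproving it). Your first two steps are sound and match the paper's proof.

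However, your interpolation step contains a genuine error. For $t\in[nT,(n+1)T]$ you define $\ker P^\eps(t)=U^\eps(t,nT)\,\ker P^\eps_n$, i.e.\ you push the \emph{stable} subspace forward. But the evolution operator of this problem is not surjective: by the smoothing property \reff{U-smooth} (see Lemma \ref{lem:d}), the range of $U^\eps(t,nT)$ is contained in $C^1\left([0,1]\right)^n$ as soon as $t-nT\ge d$, which is a proper subspace of $C\left([0,1]\right)^n$. Consequently $\Im P^\eps(t)+\ker P^\eps(t)\subseteq \Im U^\eps(t,nT)\subsetneq C\left([0,1]\right)^n$ for such $t$, so your $P^\eps(t)$ cannot be a projection on the whole space, and properties (ii)--(iv) of Definition \ref{defn:dich} cannot even be formulated for it. The correct construction pushes the unstable space forward but pulls the stable space \emph{back}: set $\ker P^\eps(t):=\left\{x : U^\eps((n+1)T,t)\,x\in\ker P^\eps_{n+1}\right\}$ and $\Im P^\eps(t):=U^\eps(t,nT)\,\Im P^\eps_n$. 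One then checks that these intersect trivially (if $x=U^\eps(t,nT)y$ with $y\in\Im P^\eps_n$ lands in the preimage, then $V^\eps_{n+1}y\in\Im P^\eps_{n+1}\cap\ker P^\eps_{n+1}=\{0\}$, and $V^\eps_{n+1}$ is injective on $\Im P^\eps_n$), and that they span: given $x$, solve $V^\eps_{n+1}y=P^\eps_{n+1}U^\eps((n+1)T,t)\,x$ for $y\in\Im P^\eps_n$, put $u=U^\eps(t,nT)y$ and $z=x-u$, and note $U^\eps((n+1)T,t)\,z\in\ker P^\eps_{n+1}$. Uniform boundedness of the resulting projections and the estimates (iii)--(iv) then follow from the discrete estimates together with the uniform bounds \reff{evol_bound} and \reff{apriori}, as you indicate. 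This pull-back/push-forward asymmetry is precisely the content of the transfer results the paper cites; your proof is repaired once you replace your definition of $\ker P^\eps(t)$ by the preimage construction.
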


\begin{proof}
Given $s\in\R$ and $\eps>0$, set
 $$
t_n = s + 2dn,\quad T_n^\eps = U^\eps(t_0 + 2d(n+1), t_0 + 2dn)\qquad \mbox{ for } n \in \Z.
$$
If the evolution operator $U^0(t,s)$ has an exponential dichotomy,
then  the sequence $\left\{T_n^0\right\}$ has a discrete dichotomy in the sense of \cite[Definition 7.6.4]{H}.

By \cite[Theorem 7.6.7]{H}, there exists $\eta> 0$ such that for all $\eps>0$
with
$$
\sup_n\|T_n^0 - T_n^\eps\|  \le \eta
$$
$\left\{T_n^\eps\right\}$ has a discrete dichotomy.

Now we are in the conditions of \cite[Excersise 10, p. 229--230]{H} (see also a more general statement \cite[Theorem 4.1]{ChL1}),
what finishes the proof.
\end{proof}


\section{Proof of Theorem~\ref{robust}}\label{sec:main_thm}

Given $s\in\R$ and $\eps>0$, let us introduce
 linear bounded operators $D^\eps, F^\eps: C\left(\overline\Pi_s\right)^n \to
C\left(\overline\Pi_s\right)^n$
 by
\begin{eqnarray*}
\left(D^\eps w\right)_j(x,t) & = &
-\int_{x_j(x,t,\eps)}^x d_j(\xi,x,t,\eps)\sum_{k=1\atop k\not=j}^nb_{jk}(\xi,\om_j(\xi;x,t,\eps),\eps)
w_k(\xi,\om_j(\xi;x,t,\eps))\,d\xi,\\
\left(F^\eps f\right)_j(x,t)&=&\int_{x_j(x,t,\eps)}^x d_j(\xi,x,t,\eps)f_j(\xi,\om_j(\xi;x,t,\eps))d\xi.
\end{eqnarray*}
Here again we dropped the dependence of $D^\eps$ and $F^\eps$ on $s$, as throughout the proof
$s\in\R$ is arbitrarily fixed.
To simplify further notation, set
\beq
\begin{array}{cc}
a(x,t)=a(x,t,0),\quad b(x,t)=b(x,t,0),  c_j(x,t)=c_j(x,t,0),\quad d_j(x,t)=d_j(x,t,0),\quad \\
a^\eps(x,t)=a(x,t,\eps),\quad b^\eps(x,t)=b(x,t,\eps),\quad \be_{jk}^\eps(x,t)=\be_{jk}(x,t,\eps),\\
\om_j(\xi;x,t)=\om_j(\xi;x,t,0),\quad x_j(x,t)= x_j(x,t,0),\quad D=D^0,\quad F=F^0.
\end{array}
\ee

Fix arbitrary  values $s\in\R$ and $\eps\le\eps_0$ and an arbitrary initial function  
$\vphi\in C\left([0,1]\right)^n$
in (\ref{eq:2}). Let $u$ and $v$ be the continuous solutions to the problem (\ref{eq:1}), (\ref{eq:3}), (\ref{eq:2})
with $\eps=0$ and $\eps$, respectively.
By Lemma~\ref{lem:d}, the evolution operator $U^\eps(t,s)$ is smoothing with the time of smoothing
not exceeding $d$. This means that starting at $t=s+d$ the
solutions $u$ and $v$ are continuously differentiable and, therefore, satisfy the system
(\ref{eq:1})  pointwise. Hence, the difference $u-v$
fulfills the equation
\begin{equation}\label{eq:1u}
(\partial_t  + a(x,t)\partial_x + b(x,t))
 (u-v) = \left(a^\eps(x,t)-a(x,t)\right)\partial_xv+ \left(b^\eps(x,t)-b(x,t)\right)v,\quad (x,t)\in\Pi_{s+d}
\end{equation}
and the boundary conditions
\begin{eqnarray}
\begin{array}{ll}
(u_j-v_j)(0,t) = \left(R(u-v)\right)_j(t),\quad 1\le j\le m,\quad t\ge s
\\
(u_j-v_j)(1,t) =  \left(R(u-v)\right)_j(t), \quad m<j\le n,\quad t\ge s,
\end{array}\label{eq:3u}
\end{eqnarray}
or, the same, the operator equation
\begin{eqnarray}
\label{rep_(u-v)}
u-v\big|_{\overline\Pi_{s+d}}=BR(u-v)+D(u-v)+F\left(\left(a^\eps-a\right)\partial_xv\right)+F\left(\left(b^\eps-b\right)v\right).
\end{eqnarray}
A similar equation is true for $u-v$ under the operator $BR$, what entails
\begin{eqnarray}
u-v\big|_{\overline\Pi_{s+d}}&=&(BR)^2(u-v)+(I+BR)D(u-v)\nonumber\\
&+&(I+BR)F\left(\left(a^\eps-a\right)\partial_xv\right)+(I+BR)F\left(\left(b^\eps-b\right)v\right).\nonumber
\end{eqnarray}
Doing this iteration, on the $k$-th step we meet the property (see \reff{U-smooth})
\beq\label{U-smooth1}
\left(BR\right)^{k}(u-v)\equiv 0
\ee
and, hence, get the formula
\begin{eqnarray*}
u-v\big|_{\overline\Pi_{s+d}}=\sum_{i=0}^{k-1}(BR)^{i}D(u-v)+
\sum_{i=0}^{k-1}(BR)^{i}F\left(\left(a^\eps-a\right)\partial_xv\right)+
\sum_{i=0}^{k-1}(BR)^{i}F\left(\left(b^\eps-b\right)v\right).\nonumber
\end{eqnarray*}
In particular,
\begin{eqnarray}
\label{rep_(u-v)_smooth}
\lefteqn{
(u-v)(x,s+2d)=\sum_{i=0}^{k-1}\left[(BR)^{i}D(u-v)\right](x,s+2d)}\\
&&+
\sum_{i=0}^{k-1}\left[(BR)^{i}F\left(\left(a^\eps-a\right)\partial_xv\right)\right](x,s+2d)
+
\sum_{i=0}^{k-1}\left[(BR)^{i}F\left(\left(b^\eps-b\right)v\right)\right](x,s+2d).\nonumber
\end{eqnarray}
Therefore, on the account of  Theorem \ref{robust_gen}, we are done if we show that, given  $\eta>0$,
there is $\eps^\prime\le\eps_0$ such that
\beq\label{main_estim}
\|(u-v)(\cdot,s+2d)\|_{C\left([0,1]\right)^n}\le \eta\|\vphi\|_{C\left([0,1]\right)^n},
\ee
the bound being uniform in $s\in\R$, $\eps\le\eps^\prime$, and $\vphi\in C\left([0,1]\right)^n$.
To derive \reff{main_estim}, we estimate
each of the three sums in  \reff{rep_(u-v)_smooth} separately.

To obtain the desired estimate for the first sum in \reff{rep_(u-v)_smooth},
 we first derive the formula for $D(u-v)$ contributing into this summand.
To this end, use the operator representation  for $u$ and $v$, namely,
$$
u=BSu+Du,\quad v=B^\eps Sv+D^\eps v,
$$
where the functions $u$ and $v$ are restricted to $\overline\Pi_s\setminus\Pi_{s+2d}$
and the operators $B^\eps, S$, and $D^\eps $ are restricted to $C\left(\overline\Pi_s\setminus\Pi_{s+2d}\right)^n$.
Note that, as it follows from the definition,  $B^\eps, S$, and $D^\eps $ map  
$C\left(\overline\Pi_s\setminus\Pi_{s+2d}\right)^n$ into $C\left(\overline\Pi_s\setminus\Pi_{s+2d}\right)^n$.
Thus, for the difference we have
\begin{eqnarray}
\label{rep_(u-v)0}
u-v=BS(u-v)+\left(B-B^\eps\right)Sv+D(u-v)+(D-D^\eps)v,
\end{eqnarray}
hence
\begin{eqnarray}
\label{rep_(u-v)1}
D(u-v)=DBS(u-v)+D\left(B-B^\eps\right)Sv+D^2(u-v)+D(D-D^\eps)v.
\end{eqnarray}
Our next objective is to rewrite the last equation with respect to the new variable
\beq\label{w}
w=D(u-v).
\ee
With this aim we substitute \reff{rep_(u-v)0} into the first summand in the right-hand side of \reff{rep_(u-v)1} and get
\begin{eqnarray}
\label{rep_(u-v)2}
w&=&D(BS)^2(u-v)+D(I+BS)\left(B-B^\eps\right)Sv\nonumber\\
&+&D(I+BS)w+D(I+BS)(D-D^\eps)v.
\end{eqnarray}
Continuing in this fashion (again substituting \reff{rep_(u-v)0} into the first summand
in the right-hand side of \reff{rep_(u-v)2}), on the $r$-th step  we arrive at the formula
\begin{eqnarray}
\label{rep_(u-v)3}
w&=&D(BS)^r(u-v)+D\sum_{i=0}^{r-1}(BS)^i\left(B-B^\eps\right)Sv\nonumber\\
&+&D\sum_{i=0}^{r-1}(BS)^iw+D\sum_{i=0}^{r-1}(BS)^i(D-D^\eps)v.
\end{eqnarray}
Since $(u-v)(\cdot,s)\equiv 0$ on $[0,1]$, there exists $r_0\in\N$
 such that
 $(BS)^{r_0}(u-v)=0$. 
Therefore, the resulting equation for $w$ restricted to $\overline\Pi_s\setminus\Pi_{s+2d}$
can be written as
\begin{eqnarray}
\label{rep_(u-v)4}
w=D\sum_{i=0}^{r_0-1}(BS)^i\left(B-B^\eps\right)Sv+D\sum_{i=0}^{r_0-1}(BS)^i(D-D^\eps)v
+D\sum_{i=0}^{r_0-1}(BS)^iw.
\end{eqnarray}
Our goal now is to show the existence of a function
$\al:[0,1]\to\R$ with $\al(\eps)\to 0$ as $\eps\to 0$ for which we have
\begin{eqnarray}
\label{apr_w}
\|w\|_{C\left(\overline\Pi_s\setminus\Pi_{s+2d}\right)^n}\le\al(\eps)\|\vphi\|_{C([0,1])^n},
\end{eqnarray}
the estimate being uniform in   $s\in\R$ and $\vphi\in C([0,1])^n$ satisfying the zero-order compatibility conditions
\reff{comp}.
With this aim we first
 show that there is  a function
$\tilde\al(\eps)$  meeting the same properties as $\al(\eps)$ such that the
$C\left(\overline\Pi_s\setminus\Pi_{s+2d}\right)^n$-norm
of the first two summands in the right-hand side of \reff{rep_(u-v)4} is bounded from above by
$\tilde\al(\eps)\|\vphi\|_{C([0,1])^n}$.
Afterwords, we use the boundedness of the operators
$B,S,D$ restricted to $C\left(\overline\Pi_s\setminus\Pi_{s+2d}\right)^n$,
then apply Gronwall's inequality to \reff{rep_(u-v)4}, and this way
derive \reff{apr_w}. To this end, observe that  the integral operator $D$ can be considered as
Volterra  operator of the second kind. This follows from the fact that $D$ can be equivalently defined by the formula
\begin{eqnarray*}
\left(D w\right)_j(x,t) & = &
-\int_{t_j(x,t)}^t \tilde d_j(\tau,x,t)\sum_{k=1\atop k\not=j}^nb_{jk}(\tilde\om_j(\tau;x,t),\tau)
w_k(\tilde\om_j(\tau;x,t),\tau)\,d\tau,
\end{eqnarray*}
where $\tau\in\R \mapsto \tilde\om_j(\tau;x,t)\in[0,1]$ is the inverse form of
the $j$-th characteristic of \reff{eq:1}
passing through the point $(x,t)\in\overline\Pi$, $t_j(x,t)$ is a minimum value of $\tau$
at which the characteristic $\tau=\tilde\om_j(\tau;x,t)$ reaches $\d\Pi_s$, and
\begin{eqnarray*}
\tilde d_j(\tau,x,t)=\exp \int_t^\tau
b_{jj}(\tilde\om_j(\eta;x,t),\eta)\,d\eta.
\end{eqnarray*}

Thus, the estimate \reff{apr_w} will be proved as soon as we derive the upper bound
$\tilde\al(\eps)\|\vphi\|_{C([0,1])^n}$ for the absolute value of the first two summands in \reff{rep_(u-v)4}.
The idea behind the proof is a smoothing property
of the operators representing those summands. We prove this only for one summand in each sum (when $i=0$).
For all other summands we apply similar argument.

Thus, to get the desired estimate for the summand $D(D-D^\eps)v$, it suffices to show that, given $j\le n$,
the function $\left(DD^\eps v\right)_j(x,t)$ is continuously differentiable in $\eps$ and that the derivative
is bounded on $\overline\Pi_s\setminus\Pi_{s+2d}$ uniformly in $s\in\R$ and $\eps\le\eps_0$.
Indeed, following the techniques from  \cite{kmit}, fix a sequence  $v^l\in C^1\left(\overline\Pi\right)^n$ such that
\beq\label{eq:lim_0}
 v^l\to v \mbox{ in } C\left(\overline\Pi_s\setminus \Pi_{s+2d}\right)^n \mbox{ as } l\to\infty.
\ee
We are done if we prove
that $\d_\eps\left[\left(DD^\eps v^l\right)_j(x,t)\right]$ converges in  $C\left(\overline\Pi_s\setminus \Pi_{s+2d}\right)$ as
$l\to\infty$ and that the limit function is bounded on $\overline\Pi_s\setminus\Pi_{s+2d}$ uniformly in $s\in\R$
and $\eps\le\eps_0$.
Consider the following expression for $\left(DD^\eps v^l\right)_j(x,t)$:
\begin{eqnarray}
\left(DD^\eps v^l\right)_j(x,t)
&=&\sum_{k=1\atop k\not=j}^n\sum_{i=1\atop i\not=k}^n
\int_{x_j(x,t)}^x \int_{x_k(\xi,\om_j(\xi;x,t),\eps)}^\xi d_{jki}(\xi,\eta,x,t,\eps)b_{jk}(\xi,\om_j(\xi;x,t))\nonumber\\
&\times&
v_i^l(\eta,\om_k(\eta;\xi,\om_j(\xi;x,t),\eps))\, d \eta d \xi\label{D11}
\end{eqnarray}
with
\begin{eqnarray*}
d_{jki}(\xi,\eta,x,t,\eps)
=d_j(\xi,x,t)d_k(\eta,\xi,\om_j(\xi;x,t),\eps)b_{ki}(\eta,\om_k(\eta;\xi,\om_j(\xi;x,t),\eps),\eps).
\label{djkl}
\end{eqnarray*}
 Let $x_{jk}(x,t,\eps)$ denote the $x$-coordinate
of the point (if any) where the characteristics $\om_j(\xi;x,t)$ and $\om_k(\xi;0,s,\eps)$ if $k\le m$
and the characteristics $\om_j(\xi;x,t,\eps)$ and $\om_k(\xi;1,s,\eps)$ if $k>m$ intersect. Hence, $x_{jk}(x,t,\eps)$
satisfies the  equation
\begin{eqnarray}
\label{x_jk1}
\om_j(x_{jk}(x,t,\eps);x,t)=\om_k(x_{jk}(x,t,\eps);0,s,\eps)
\end{eqnarray}
if $k\le m$ and the equation
\begin{eqnarray}
\label{x_jk}
\om_j(x_{jk}(x,t,\eps);x,t)=\om_k(x_{jk}(x,t,\eps);1,s,\eps)
\end{eqnarray}
if $k>m$. Suppose for definiteness that $j\le m$ and $k>m$ (similar argument works for all other $j\ne k$). Thus, if $x_{jk}(x,t,\eps)$ exists for some $(x,t,\eps)$,
then the integrals in  \reff{D11} admit the decomposition
\begin{eqnarray}
\label{decom}
\int_{x_j(x,t)}^x \int_{x_k(\xi,\om_j(\xi;x,t),\eps)}^\xi d \eta d \xi=
\int_{x_j(x,t)}^{x_{jk}(x,t,\eps)} \int_{x_k(\xi,\om_j(\xi;x,t),\eps)}^\xi d \eta d \xi+
\int_{x_{jk}(x,t,\eps)}^x \int_1^\xi d \eta d \xi,
\end{eqnarray}
where the function $x_k(\xi,\om_j(\xi;x,t),\eps)$ in the right-hand side  satisfies the equality
\begin{eqnarray}
\label{s}
\om_k(x_k(\xi,\om_j(\xi;x,t),\eps);\xi,\om_j(\xi;x,t),\eps)=s.
\end{eqnarray}
Now we intend to show that the derivatives  $\d_\eps x_k(\xi,\om_j(\xi;x,t),\eps)$ and $\d_\eps x_{jk}(x,t,\eps)$ exist.
With this aim we introduce a couple of useful formulas:
\begin{eqnarray}
\label{dx}
\d_x\om_j(\xi;x,t,\eps) & = & -\frac{1}{a_j(x,t,\eps)} \exp \int_\xi^x
\left(\frac{\d_ta_j}{a_j^2}\right)(\eta,\om_j(\eta;x,t,\eps),\eps) d \eta,
\\
\label{dt}
\d_t\om_j(\xi;x,t,\eps) & = & \exp \int_\xi^x \left(\frac{\d_ta_j}{a_j^2}\right)(\eta,\om_j(\eta;x,t,\eps),\eps) d \eta,\\
\label{d_eps}
\d_\eps\om_j(\xi;x,t,\eps) & = & \exp \int_\xi^x \left(\frac{\d_ta_j}{a_j^2}\right)(\eta,\om_j(\eta;x,t,\eps),\eps) d \eta
\nonumber\\\nonumber
&\times&
\int_\xi^x \left(\frac{\d_\eps a_j}{a_j^2}\right)(\eta,\om_j(\eta;x,t,\eps),\eps)\\
&\times&
 \exp \int^{\eta}_x \left(\frac{\d_ta_j}{a_j^2}\right)(\eta_1,\om_j(\eta_1;x,t,\eps),\eps)\, d \eta_1\,d\eta.
\end{eqnarray}
Then the existence of the derivatives  $\d_\eps x_k(\xi,\om_j(\xi;x,t),\eps)$ and $\d_\eps x_{jk}(x,t,\eps)$
follow from the equalities \reff{s} and \reff{x_jk}, respectively. Furthermore, we derive the formulas
\beq
\label{d_x_k}
\d_\eps x_k(\xi,\om_j(\xi;x,t),\eps)= -a_k(x_k(\xi,\om_j(\xi;x,t),\eps),s)
\d_4\om_k(x_k(\xi,\om_j(\xi;x,t),\eps);\xi,\om_j(\xi;x,t),\eps)
\ee
and
\beq
\label{d_x_jk}
\d_\eps x_{jk}(x,t,\eps)\left(\frac{a_k^\eps-a_j}{a_ja_k^\eps}\right)(x_{jk}(x,t,\eps),\om_j(x_{jk}(x,t,\eps);x,t))=
\d_4\om_k(x_{jk}(x,t,\eps);1,s,\eps).
\ee
Hence, on the account of the assumption \reff{cass}, from the last equality we get
\beq
\label{bd_x_jk}
\begin{array}{cc}
\d_\eps x_{jk}(x,t,\eps)b_{jk}(x_{jk}(x,t,\eps),\om_j(x_{jk}(x,t,\eps);x,t))\\
=
\left(\be_{jk}^\eps a_ja_k^\eps\right)(x_{jk}(x,t,\eps),\om_j(x_{jk}(x,t,\eps);x,t))\d_4\om_k(x_{jk}(x,t,\eps);1,s,\eps).
\end{array}
\ee

Now, using the regularity assumption \reff{eq:dif}, we are able to compute the derivative 
\begin{eqnarray}
\lefteqn{
\d_\eps\left[\left(DD^\eps v^l\right)_j(x,t)\right]
}\nonumber\\
&=&\sum_{k=1\atop k\not=j}^n\sum_{i=1\atop i\not=k}^n
\int_{x_j(x,t)}^x \int_{x_k(\xi,\om_j(\xi;x,t),\eps)}^x \d_\eps\Bigl[d_{jki}(\xi,\eta,x,t,\eps)
b_{jk}(\xi,\om_j(\xi;x,t))\Bigr]\nonumber\\
&\times&
v_i^l(\eta,\om_k(\eta;\xi,\om_j(\xi;x,t),\eps)) \,d \eta d \xi \nonumber\\
&+&\sum_{k=1\atop k\not=j}^n\sum_{i=1\atop i\not=k}^n
\int_{x_j(x,t)}^x \int_{x_k(\xi,\om_j(\xi;x,t),\eps)}^x {d}_{jki}(\xi,\eta,x,t,\eps)b_{jk}(\xi,\om_j(\xi;x,t))\nonumber\\
&\times&\d_\eps\om_k(\eta;\xi,\om_j(\xi;x,t),\eps)\d_2
v_i^l(\eta,\om_k(\eta;\xi,\om_j(\xi;x,t),\eps))\, d \eta d \xi\nonumber\\
&+&\sum_{k=1\atop k\not=j}^n\sum_{i=1\atop i\not=k}^n
\left(\be_{jk}^\eps a_ja_k^\eps\right)(x_{jk}(x,t,\eps),\om_j(x_{jk}(x,t,\eps);x,t))\d_4\om_j(x_{jk}(x,t,\eps);1,s,\eps)
\nonumber\\
&\times&
\int_{x_k(x_{jk}(x,t,\eps),\om_j(x_{jk}(x,t,\eps);x,t),\eps)}^\xi
\Bigl[d_{jki}(\xi,\eta,x,t,\eps)v_i^l(\eta,\om_k(\eta;\xi,\om_j(\xi;x,t),\eps))\Bigr]_{\xi=x_{jk}(x,t,\eps)}\, d \eta
\nonumber\\
&-&\sum_{k=1\atop k\not=j}^n\sum_{i=1\atop i\not=k}^n
\int_{x_j(x,t)}^{x_{jk}(x,t,\eps)}
\d_\eps x_k(\xi,\om_j(\xi;x,t),\eps)
b_{jk}(\xi,\om_j(\xi;x,t))\nonumber\\
&\times&
\Bigl[d_{jki}(\xi,\eta,x,t,\eps)
v_i^l(\eta,\om_k(\eta;\xi,\om_j(\xi;x,t),\eps))
\Bigr]_{\eta=x_k(\xi,\om_j(\xi;x,t),\eps)} \,d \xi\nonumber\\
&-&\sum_{k=1\atop k\not=j}^n\sum_{i=1\atop i\not=k}^n
\left(\be_{jk}^\eps a_ja_k^\eps\right)(x_{jk}(x,t,\eps),\om_j(x_{jk}(x,t,\eps);x,t))\d_4\om_j(x_{jk}(x,t,\eps);1,s,\eps)
\nonumber\\
&\times&
\int_1^\xi\Bigl[d_{jki}(\xi,\eta,x,t,\eps)v_i^l(\eta,\om_k(\eta;\xi,\om_j(\xi;x,t),\eps))\Bigr]_{\xi=x_{jk}(x,t,\eps)}\, d \eta,
\label{dtD}
\end{eqnarray}
where $\d_rg$ here and below  denotes the derivative of $g$ with respect to the $r$-th argument.
Note that $x_k(x_{jk}(x,t,\eps),\om_j(x_{jk}(x,t,\eps);x,t),\eps)=1$, hence the third and the fifth summands
in the right-hand side cancel out.
The first and the fourth summands  converge in $C\left(\overline\Pi_s\setminus \Pi_{s+2d}\right)$ as $l\to\infty$.
Our task is therefore reduced to show the uniform convergence
of all integrals in the second summand. For this purpose we will transform the integrals as follows:
Changing  the order of integration and using  \reff{eq:dif} and \reff{cass},
we get (to simplify notation in the calculation below  we drop the dependence of $x_j$ on $x$ and $t$)
\begin{eqnarray}
\lefteqn{\int_{x_j}^x \int_\eta^x {d}_{jki}(\xi,\eta,x,t,\eps)b_{jk}(\xi,\om_j(\xi;x,t))\d_\eps\om_k(\eta;\xi,\om_j(\xi;x,t),\eps)}\nonumber\\
&\times&\d_2v_i^l(\eta,\om_k(\eta;\xi,\om_j(\xi;x,t),\eps))\,
d \xi d \eta\nonumber\\
&=&\int_{x_j}^x \int_\eta^x {d}_{jki}(\xi,\eta,x,t,\eps)
\d_\eps\om_k(\eta;\xi,\om_j(\xi;x,t),\eps)b_{jk}(\xi,\om_j(\xi;x,t))\nonumber\\
&\times&\Bigl[\bigl(\d_\xi\om_k\bigr)(\eta;\xi,\om_j(\xi;x,t),\eps)\Bigr]^{-1}
\bigl(\d_\xi v_i^l\bigr)(\eta,\om_k(\eta;\xi,\om_j(\xi;x,t),\eps))d \xi d \eta\nonumber\\
&=&\int_{x_j}^x \int_\eta^x {d}_{jki}(\xi,\eta,x,t,\eps)\d_\eps\om_k(\eta;\xi,\om_j(\xi;x,t),\eps))
\Bigl(\be_{jk}^\eps a_ja_k^\eps\Bigr)(\xi,\om_j(\xi;x,t))\nonumber\\
&\times&\d_3\om_k(\eta;\xi,\om_j(\xi;x,t),\eps)
\bigl(\d_\xi v_i^l\bigr)(\eta,\om_k(\eta;\xi,\om_j(\xi;x,t),\eps)) d \xi d \eta\nonumber\\
&=&\int_{x_j}^x \int_\eta^x \tilde{d}_{jki}(\xi,\eta,x,t,\eps)\bigl(\d_\xi v_i^l\bigr)(\eta,\om_k(\eta;\xi,\om_j(\xi;x,t),\eps) )
d \xi d \eta\nonumber\\
\nonumber&=&-\int_{x_j}^x\int_\eta^x \d_\xi \tilde{d}_{jki}(\xi,\eta,x,t,\eps)v_i^l
\left(\eta,\om_k(\eta;\xi,\om_j(\xi;x,t),\eps)\right) d\xi d\eta\nonumber\\
&+&\int_{x_j}^x\Big[\tilde{d}_{jki}(\xi,\eta,x,t,\eps)v_i^l\left(\eta,\om_k(\eta;\xi,\om_j(\xi;x,t),\eps)\right)
\Big]_{\xi=\eta}^{\xi=x}d\eta.\label{by_parts}
\end{eqnarray}
Here
\begin{eqnarray*}
\tilde{d}_{jki}(\xi,\eta,x,t,\eps) &=& {d}_{jki}(\xi,\eta,x,t,\eps)\d_\eps\om_k(\eta;\xi,\om_j(\xi;x,t),\eps)\\
&\times&
\d_3\om_k(\eta;\xi,\om_j(\xi;x,t),\eps)\left(\be_{jk}^\eps a_ja_k^\eps
\right)
(\xi,\om_j(\xi;x,t)).
\end{eqnarray*}
Now, the desired convergence follows from  \reff{eq:lim_0}  and \reff{dx}--\reff{d_eps}.
The desired boundedness of the limit function is a consequence of the assumptions  \reff{eq:L3},
\reff{eq:L5}, and \reff{eq:L4}.

Returning to the formula \reff{rep_(u-v)4}, similar argument works also for the operators contributing into the first sum:
Again, for $i=0$, on the account of the definition of the operators $D$ and $B^\eps$,  we have
to show that the $\eps$-derivative of
\begin{eqnarray}
\lefteqn{
\left(DB^\eps v^l\right)_j(x,t)}\nonumber\\
&=&\sum_{k=1\atop k\not=j}^n
\int^{x_j(x,t)}_x  d_j(\xi,x,t)b_{jk}(\xi,\om_j(\xi;x,t))
c_k(x_k(\xi,\om_j(\xi;x,t),\eps),\xi,\om_j(\xi;x,t),\eps)\nonumber\\
&\times& v_k^l(x_k(\xi,\om_j(\xi;x,t),\eps),\om_k(x_k(\xi,\om_j(\xi;x,t),\eps);\xi,\om_j(\xi;x,t),\eps) d \xi
\label{DB}
\end{eqnarray}
converges uniformly on $\overline\Pi_s\setminus \Pi_{s+2d}$ and that the limit function is bounded uniformly in $s\in\R$
and $\eps\le\eps_0$. To show this, we  differentiate \reff{DB} in $\eps$, use \reff{cass}, and integrate by parts.
To be more precise, fix arbitrary $j\le m$ and $k>m$ (similarly for all other $j\ne k$) and rewrite the $k$-th summand in the right-hand side of \reff{DB} as (up to the sign)
\begin{eqnarray}
\label{DB1}
\lefteqn{
\int_{x_j(x,t)}^{x_{jk}(x,t,\eps)}  d_j(\xi,x,t)b_{jk}(\xi,\om_j(\xi;x,t))
c_k(x_k(\xi,\om_j(\xi;x,t),\eps),\xi,\om_j(\xi;x,t),\eps)}\nonumber\\
&\times& v_k^l(x_k(\xi,\om_j(\xi;x,t),\eps),s) d \xi
\nonumber\\
&+&
\int_{x_{jk}(x,t,\eps)}^x  d_j(\xi,x,t)b_{jk}(\xi,\om_j(\xi;x,t))
c_k(1,\xi,\om_j(\xi;x,t),\eps)\nonumber\\
&\times& v_k^l(1,\om_k(1;\xi,\om_j(\xi;x,t),\eps)) d \xi
\end{eqnarray}
Then the $\eps$-derivative of this expression equals
\begin{eqnarray}
\label{DB2}
\lefteqn{
\int_{x_j(x,t)}^{x}  d_j(\xi,x,t)b_{jk}(\xi,\om_j(\xi;x,t))
\d_\eps c_k(x_k(\xi,\om_j(\xi;x,t),\eps),\xi,\om_j(\xi;x,t),\eps)}\nonumber\\
&\times& v_k^l(x_k(\xi,\om_j(\xi;x,t),\eps),s) d \xi
\nonumber\\
&+&
\int_{x_j(x,t)}^{x_{jk}(x,t,\eps)}  d_j(\xi,x,t)b_{jk}(\xi,\om_j(\xi;x,t))
c_k(x_k(\xi,\om_j(\xi;x,t),\eps),\xi,\om_j(\xi;x,t),\eps)\nonumber\\
&\times& \d_\eps x_k(\xi,\om_j(\xi;x,t),\eps)\d_1v_k^l(x_k(\xi,\om_j(\xi;x,t),\eps),s) d \xi
\nonumber\\
&+&
\int_{x_{jk}(x,t,\eps)}^x  d_j(\xi,x,t)b_{jk}(\xi,\om_j(\xi;x,t))
c_k(1,\xi,\om_j(\xi;x,t),\eps)\nonumber\\
&\times& \d_\eps\om_k(1;\xi,\om_j(\xi;x,t),\eps)\d_2v_k^l(1,\om_k(1;\xi,\om_j(\xi;x,t),\eps)) d \xi.
\end{eqnarray}
For the first summand the desired convergence and the uniform boundedness of the limit function is obvious.
The last two  summands are equal to
\begin{eqnarray}
\label{DB3}
\lefteqn{
\int_{x_j(x,t)}^{x_{jk}(x,t,\eps)}  d_j(\xi,x,t)b_{jk}(\xi,\om_j(\xi;x,t))
c_k(x_k(\xi,\om_j(\xi;x,t),\eps),\xi,\om_j(\xi;x,t),\eps)}\nonumber\\
&\times& \d_\eps x_k(\xi,\om_j(\xi;x,t),\eps)
\left[\d_\xi x_k(\xi,\om_j(\xi;x,t),\eps)\right]^{-1}
\d_\xi v_k^l(x_k(\xi,\om_j(\xi;x,t),\eps),s) d \xi
\nonumber\\
&+&
\int_{x_{jk}(x,t,\eps)}^x  d_j(\xi,x,t)b_{jk}(\xi,\om_j(\xi;x,t))
c_k(1,\xi,\om_j(\xi;x,t),\eps)\d_\eps\om_k(1;\xi,\om_j(\xi;x,t),\eps)\nonumber\\
&\times&
\left[\d_\xi\om_k(1;\xi,\om_j(\xi;x,t),\eps))\right]^{-1}
\d_\xi v_k^l(1,\om_k(1;\xi,\om_j(\xi;x,t),\eps)) d \xi.
\end{eqnarray}
Next we use the formulas \reff{s}, \reff{dx}, and  \reff{dt} and calculate
\begin{eqnarray*}
\label{d_xi1}
\displaystyle\d_\xi x_k(\xi,\om_j(\xi;x,t),\eps)&=&a_k(x_k(\xi,\om_j(\xi;x,t),\eps),s)\left(\frac{a_k^\eps-a_j}{a_ja_k^\eps}\right)(\xi,\om_j(\xi;x,t))\\
 &\times&\displaystyle
\d_3\om_k(x_k(\xi,\om_j(\xi;x,t),\eps);\xi,\om_j(\xi;x,t),\eps),
\nonumber\\[3mm]
\displaystyle\d_\xi\om_k(1;\xi,\om_j(\xi;x,t),\eps)&=&\left(\frac{a_k^\eps-a_j}{a_ja_k^\eps}\right)(\xi,\om_j(\xi;x,t))
\d_3\om_k(1;\xi,\om_j(\xi;x,t),\eps).
\end{eqnarray*}
Now, due to the assumptions \reff{eq:dif} and \reff{cass}, we are in a position to bring the expression \reff{DB3}
to a desirable form
\begin{eqnarray}
\label{DB4}
\lefteqn{
\int_{x_j(x,t)}^{x_{jk}(x,t,\eps)}  d_j(\xi,x,t)
c_k(x_k(\xi,\om_j(\xi;x,t),\eps),\xi,\om_j(\xi;x,t),\eps)\d_\eps x_k(\xi,\om_j(\xi;x,t),\eps)}\nonumber\\
&\times&
a_k(x_k(\xi,\om_j(\xi;x,t),\eps),s)\d_3\om_k(x_k(\xi,\om_j(\xi;x,t),\eps);\xi,\om_j(\xi;x,t),\eps)\nonumber\\
&\times&
\left(a_ja_k^\eps\be_{jk}^\eps\right)(\xi,\om_j(\xi;x,t))
\d_\xi v_k^l(x_k(\xi,\om_j(\xi;x,t),\eps),s) d \xi
\nonumber\\
&+&
\int_{x_{jk}(x,t,\eps)}^x  d_j(\xi,x,t)
c_k(1,\xi,\om_j(\xi;x,t),\eps)\d_\eps\om_k(1;\xi,\om_j(\xi;x,t),\eps)\nonumber\\
&\times& \d_3\om_k(1;\xi,\om_j(\xi;x,t),\eps)\left(a_ja_k\be_{jk}^\eps\right)(\xi,\om_j(\xi;x,t))
\d_\xi v_k^l(1,\om_k(1;\xi,\om_j(\xi;x,t),\eps)) d \xi\nonumber\\
&=&-\int_{x_j(x,t)}^{x_{jk}(x,t,\eps)}  \d_\xi e_{jk}(\xi,x,t,\eps)
v_k^l(x_k(\xi,\om_j(\xi;x,t),\eps),s) d \xi\nonumber\\
&+& e_{jk}(\xi,x,t,\eps)
v_k^l(x_k(\xi,\om_j(\xi;x,t),\eps),s) \Big|^{x_{jk}(x,t,\eps)}_{\xi=x_j(x,t)}\nonumber\\
&-&
\int_{x_{jk}(x,t,\eps)}^x\d_\xi \tilde e_{jk}(\xi,x,t,\eps)
 v_k^l(1,\om_k(1;\xi,\om_j(\xi;x,t),\eps)) d \xi
\nonumber\\
&+&
 \tilde  e_{jk}(\xi,x,t,\eps)
 v_k^l(1,\om_k(1;\xi,\om_j(\xi;x,t),\eps))  \Big|_{\xi=x_{jk}(x,t,\eps)}^{x},
\end{eqnarray}
where
\begin{eqnarray*}
&e_{jk}(\xi,x,t,\eps)= d_j(\xi,x,t)
c_k(x_k(\xi,\om_j(\xi;x,t),\eps),\xi,\om_j(\xi;x,t),\eps)\d_\eps x_k(\xi,\om_j(\xi;x,t),\eps)&\nonumber\\
&\times
a_k(x_k(\xi,\om_j(\xi;x,t),\eps),s)\d_3\om_k(x_k(\xi,\om_j(\xi;x,t),\eps);\xi,\om_j(\xi;x,t),\eps)
\left(a_ja_k^\eps\be_{jk}^\eps\right)(\xi,\om_j(\xi;x,t)),&\nonumber\\[2mm]
 &\tilde  e_{jk}(\xi,x,t,\eps)= d_j(\xi,x,t)
c_k(1,\xi,\om_j(\xi;x,t),\eps)\d_\eps\om_k(1;\xi,\om_j(\xi;x,t),\eps)&
\nonumber\\ &\times
\left(a_ja_k^\eps\be_{jk}\right)(\xi,\om_j(\xi;x,t))\d_3\om_k(1;\xi,\om_j(\xi;x,t),\eps).&
\end{eqnarray*}
To finish with the first summand in \reff{rep_(u-v)_smooth} it remains, similarly to \reff{by_parts},
apply the conditions   \reff{eq:L3}, \reff{eq:L5},  \reff{eq:L4}, \reff{eq:lim_0},   and
the formulas \reff{dx}--\reff{d_eps}.

The last two summands in \reff{rep_(u-v)_smooth} are
treated by means of  the assumptions \reff{eq:L2}, \reff{eq:L3}, \reff{eq:L4} (entailing,
in particular, the uniform boundedness of the operators $B$ and $F$ restricted
to $C\left(\overline\Pi_s\setminus\Pi_{s+2d}\right)$) as well as by the smoothing apriori estimate
\beq\label{apr_v}
\|v\|_{C\left(\overline\Pi_{s+d}\setminus\Pi_{s+2d}\right)^n}+
\|\d_xv\|_{C\left(\overline\Pi_{s+d}\setminus\Pi_{s+2d}\right)^n}\le C\|\vphi\|_{C\left([0,1]\right)^n},
\ee
where the constant $C>0$ depends on $d$ but not on $\eps\le\eps_0$ and $s\in\R$.
We are therefore reduced to  prove the estimate \reff{apr_v}. To this end, we start with the operator
representation of $v$ in $\overline\Pi_{s+d}\setminus\Pi_{s+2d}$, namely,
$$
v=B^\eps Rv+D^\eps v.
$$
After a number of  iterations   we derive the following formula suitable for our purposes:
\beq\label{v}
v=\sum_{i=0}^{k-1}(B^\eps R)^i\left(D^\eps B^\eps R +\left(D^\eps\right)^2\right)v.
\ee
The estimate \reff{apr_v} now readily follows from the smoothing property  in $x$
 of the operators $D^\eps B^\eps$ and $\left(D^\eps\right)^2$ and the apriori estimate
\reff{apriori} with $2d$ in place of $T$. Showing the smoothing property  of the operators
$D^\eps B^\eps$ and $\left(D^\eps\right)^2$ in $x$,
we follow a similar argument as in the proof above of the smoothing property in $\eps$.
We illustrate this by example of the operator $\left(D^\eps\right)^2$ (and similarly for $D^\eps B^\eps$):
We take into account that $\left[\left(D^\eps\right)^2v^l\right]_j(x,t)$ on $\overline\Pi_{s+d}\setminus\Pi_{s+2d}$
is given by the formula \reff{D11} where  $b_{jk}$ is replaced by $b_{jk}^\eps$; $x_j(x,t)\equiv 0$ if $j\le m$; and
 $x_j(x,t)\equiv 1$ if $j>m$. Below we therefore drop the dependence of $x_j$ on $x$ and $t$.
Changing the order of integration, we have
\begin{eqnarray}
\lefteqn{
\d_x\left[\left(\left(D^\eps\right)^2v^l\right)_j(x,t)\right]
}\nonumber\\\nonumber
&=&\sum_{k=1\atop k\not=j}^n\sum_{i=1\atop i\not=k}^n
\int_{x_j}^x \int_\eta^x \d_x\bigl[d_{jki}(\xi,\eta,x,t,\eps)b_{jk}(\xi,\om_j(\xi;x,t,\eps),\eps)\bigr]
v_i^l(\eta,\om_k(\eta;\xi,\om_j(\xi;x,t,\eps),\eps)) d \xi d \eta\nonumber\\
&+&\sum_{k=1\atop k\not=j}^n\sum_{i=1\atop i\not=k}^n
\int_{x_j}^x \int_\eta^x {d}_{jki}(\xi,\eta,x,t,\eps)b_{jk}(\xi,\om_j(\xi;x,t,\eps),\eps)\nonumber\\
&\times&\d_3\om_k(\eta;\xi,\om_j(\xi;x,t,\eps),\eps)\d_x\om_j(\xi;x,t,\eps)\d_2
v_i^l(\eta,\om_k(\eta;\xi,\om_j(\xi;x,t,\eps),\eps)) d \xi d \eta.\label{final}
\end{eqnarray}
Let us transform the second summand similarly to \reff{by_parts}: For given  $k\not=j$ and $i\not=k$ we have
(using the assumptions \reff{eq:dif} and \reff{cass})
\begin{eqnarray*}
\lefteqn{\int_{x_j}^x \int_\eta^x {d}_{jki}(\xi,\eta,x,t,\eps)b_{jk}(\xi,\om_j(\xi;x,t,\eps),\eps)}\\
&\times&\d_3\om_k(\eta;\xi,\om_j(\xi;x,t,\eps),\eps)\d_x\om_j(\xi;x,t,\eps)
\d_2v_i^l(\eta,\om_k(\eta;\xi,\om_j(\xi;x,t,\eps),\eps)) d \xi d \eta\\
&=&\int_{x_j}^x \int_\eta^x {d}_{jki}(\xi,\eta,x,t,\eps)\d_3\om_k(\eta;\xi,\om_j(\xi;x,t,\eps),\eps)\d_x\om_j(\xi;x,t,\eps)\\
&\times&b_{jk}(\xi,\om_j(\xi;x,t,\eps),\eps)\Bigl[\bigl(\d_\xi\om_k\bigr)(\eta;\xi,\om_j(\xi;x,t,\eps),\eps)\Bigr]^{-1}
\bigl(\d_\xi v_i^l\bigr)
(\eta,\om_k(\eta;\xi,\om_j(\xi;x,t,\eps),\eps)) d \xi d \eta\\
&=&\int_{x_j}^x \int_\eta^x {d}_{jki}(\xi,\eta,x,t,\eps)\d_x\om_j(\xi;x,t,\eps)\bigl(a_ka_j\ga_{jk}\bigr)
(\xi,\om_j(\xi;x,t,\eps),\eps)\\
&\times&
\bigl(\d_\xi v_i^l\bigr)(\eta,\om_k(\eta;\xi,\om_j(\xi;x,t,\eps),\eps)) d \xi d \eta\\
&=&\int_{x_j}^x \int_\eta^x \tilde{d}_{jki}(\xi,\eta,x,t,\eps)\bigl(\d_\xi v_i^l\bigr)
(\eta,\om_k(\eta;\xi,\om_j(\xi;x,t,\eps),\eps))
d \xi d \eta\\
\nonumber&=&-\int_{x_j}^x\int_\eta^x \d_\xi \tilde{d}_{jki}(\xi,\eta,x,t,\eps)
v_i^l\left(\eta,\om_k(\eta;\xi,\om_j(\xi;x,t,\eps),\eps)\right)
d\xi d\eta\\
&+&\int_{x_j}^x\Big[\tilde{d}_{jki}(\xi,\eta,x,t,\eps)v_i^l\left(\eta,\om_k(\eta;\xi,\om_j(\xi;x,t,\eps),\eps)
\right)\Big]_{\xi=\eta}^{\xi=x}
d\eta,
\end{eqnarray*}
where
$$
\tilde{d}_{jki}(\xi,\eta,x,t) = {d}_{jki}(\xi,\eta,x,t,\eps)\d_x\om_j(\xi;x,t,\eps)
\left(
a_ka_j\ga_{jk}
\right)
(\xi,\om_j(\xi;x,t,\eps),\eps).
$$
Now in \reff{final} we can pass to the limit  as $l\to\infty$ and then to the right-hand side
apply the apriori estimate \reff{apriori}. Combining the resulting inequality with the formula \reff{v}
and the apriori estimate \reff{apriori}
gives \reff{apr_v}. The proof of  Theorem \ref{robust} is therewith complete.

\section*{Acknowledgments}
The first author  was supported by the Alexander von Humboldt Foundation.
Irina Kmit and Lutz Recke acknowledge support of the DFG Research Center {\sc Matheon}
mathematics for key technologies (project D8).


\begin{thebibliography}{50}

\bibitem{AkrBelZel}
 Akramov   T. A.,  Belonosov V. S.,  Zelenyak T. I.,  Lavrentev M. M., Jr., Slinko  M. G. ,   Sheplev V. S.
 Mathematical Foundations of Modeling of Catalytic Processes: A Review //
 Theoretical Foundations of Chemical Engineering.-- 2000.-- 34, N 3.-- P. 295--306.

\bibitem{BV} Barreira L., Valls C. Smooth robustness of exponential dichotomies // Proc. of Amer. Math. Soc. -- 2011. --
139. -- P. 999--1012.

\bibitem{Coppel} Coppel W.A. Dichotomies in stability theory // Lect. Notes Math. -- Springer, 1978. --
629. -- 98 p.

\bibitem{DK} Daleckiy Yu., Krein M. Stability of Solutions of Differential Equations in Banach Space. --
 Providence: Amer. Math. Soc., 1974. -- 392 p.

\bibitem{H} Henry D.  Geometric theory of semilinear parabolic equations //
Lect. Notes Math. -- Springer, 1981. -- 840. -- 352 p.

\bibitem{JS} Johnson R., Sell G. Smoothness of spectral subbundles and reducibility of quasiperiodic linear
differential systems // J. Different. Equat. -- 1981. -- 41. -- P. 262--288.

\bibitem{ChL1} Chow S.-N., Leiva H. Existence and roughness of the exponential dichotomy for
skew-product semiflow in Banach spaces // J. Different. Equat. -- 1995. -- 120. -- P. 429--477.

\bibitem{ijdsde}
Kmit I. Classical solvability of nonlinear initial-boundary problems for
first-order hyperbolic systems // Intern. J. Dynamic Systems Different.
Equat. -- 2008. -- 1, N 3. -- P. 191--195.

\bibitem{kmit} Kmit I. Smoothing effect and Fredholm property for
first-order hyperbolic PDEs // Operator Theory:
Advances and Applications. -- Basel: Birkh\"auser 2013. -- 231. -- 209--228.

\bibitem{Km} Kmit I. Smoothing solutions to initial-boundary problems  for first-order
hyperbolic systems // Applicable Analysis. -- 2011. -- 90, N 11. -- P. 1609--1634.

\bibitem{KmHo} Kmit I., H\"ormann G.
Semilinear hyperbolic systems with nonlocal
boundary conditions: reflection of singularities and delta waves //
J. Analysis Appl. -- 2001. -- 20, N 3. -- P. 637--659.

\bibitem{LRR} Lichtner M., Radziunas M., Recke L. Well-posedness, smooth dependence and center manifold reduction for
a semilinear hyperbolic system from laser dynamics // Math. Methods Appl. Sci. -- 2007. -- 30.-- P. 931--960.

\bibitem{NP} Naulin R., Pinto M. Admissible perturbations of exponential dichotomy roughness // Nonlinear
Anal. -- 1998. -- 31. -- P. 559--571.

\bibitem{Pal} Palmer K.J. A perturbation theorem for exponential dichotomies // Proc. Royal Soc. Edinburg. -- 1987. -- 106A. -- P. 25--37.

\bibitem{PS} Pliss V.A., Sell G.R.
Robustness of exponential dichotomies in infinite-dimensional dynamical systems //
J. Dynam. Different. Equat. -- 1999. -- 11. -- P. 471--513.

\bibitem{RB} Romanovskii R.K., Bel'gart L.V. On the exponential dichotomy of solutions
of the Cauchy problem for a hyperbolic system on a plane // Differ. Uravn. -- 2010. -- 46, N 8. -- P. 1125--1134.

\bibitem{SS} Sacker R., Sell G. Dichotomies for linear evolutionary equations in Banach spaces //
J. Different. Equat. -- 1994. -- 113. -- P. 17--67.

\bibitem{Sam} Samoilenko A.M.
Elements of the mathematical theory of multi-frequency oscillations. -- Dordrecht: Kluwer, 1991. -- 327 p.

\bibitem{T} Tkachenko V.I. On the exponential dichotomy of pulse evolution systems // Ukrain. Math. J. -- 1994. -- 46, N 4. -- P. 441--448.

\bibitem{Yi} Yi Y. A generalized integral manifold theorem // J. Different. Equat. -- 1993. -- 102. -- P. 153--187.

\bibitem{Zel1}  Zelenyak T. I.
On stationary solutions of mixed problems relating to the study of certain chemical processes //
 Differ. Equations.-- 1966.-- 2.-- P. 98-102.

\end{thebibliography}
\end{document}